\documentclass[a4paper]{article}
\usepackage{cite}
\usepackage[english]{babel}
\usepackage[utf8x]{inputenc}
\usepackage[T1]{fontenc}
\usepackage{pgf,tikz}
\usepackage{amsfonts}
\usetikzlibrary{arrows}
\usepackage[bottom]{footmisc}
\usepackage{appendix}
\usepackage[a4paper,top=3cm,bottom=3cm,left=3cm,right=3cm,marginparwidth=2cm]{geometry}

\usepackage{amsthm}
\newtheorem{thm}{Theorem}
\newtheorem{cor}{Corollary}
\usepackage{amsmath}
\usepackage{graphicx}
\usepackage[colorinlistoftodos]{todonotes}
\usepackage[colorlinks=true, allcolors=blue]{hyperref}
\usepackage{pgfplots} 
\usepackage{authblk}
\pgfplotsset{grid style={gridColor,line width=0pt}}

\newcommand\ep{\varepsilon}

\usepackage{color}

\title{Long-time behaviour of a model for p62-ubiquitin aggregation in cellular autophagy}
\author[1]{Julia Delacour}
\author[2]{Christian Schmeiser}
\author[3]{Peter Szmolyan} 
\affil[1]{Sorbonne Universit\'e, Inria, Universit\'e Paris-Diderot, CNRS, Laboratoire Jacques-Louis Lions, 75005 Paris, France}
\affil[2]{Faculty of Mathematics, University of Vienna, Oskar-Morgenstern-Platz 1, 1090 Wien, Austria}
\affil[3]{Institut f\"ur Angewandte und Numerische Mathematik, TU Wien, Wiedner Hauptstr. 8--10, 1040 Wien, Austria}
\begin{document}
\maketitle

\begin{abstract}
The qualitative behavior of a recently formulated ODE model for the dynamics of heterogenous aggregates is analyzed. Aggregates contain two types of
particles, oligomers and cross-linkers. The motivation is a preparatory step of cellular autophagy, the aggregation of oligomers of the protein p62 in the
presence of ubiquitin cross-linkers. A combination of explicit computations, formal asymptotics, and numerical simulations has led to conjectures on the 
bifurcation behavior, certain aspects of which are proven rigorously in this work. In particular, the stability of the zero state, where the model has a smoothness
deficit is analyzed by a combination of regularizing transformations and blow-up techniques. On the other hand, in a different parameter regime, the existence
of polynomially growing solutions is shown by Poincar\'e compactification, combined with a singular perturbation analysis . 
\end{abstract}

\section{Introduction}
A preparatory step of cellular autophagy is the aggregation of cellular waste material before inclusion in an autophagosome and, later, a lysosome, which 
are vesicular structures, where the waste is eventually decomposed. In vitro studies of the evolution of heterogeneous aggregates of the proteins p62 and 
ubiquitin \cite{Martens} have motivated the formulation of a mathematical model of this process \cite{firstarticle}. The model has the form of an ODE system,
which shows different qualitative behaviour in three different regions of parameter space. This statement is based on formal asymptotics and numerical
simulations carried out in \cite{firstarticle}. Since some of these observations are not accessible to standard dynamical systems methods, it is the purpose
of this work to provide a rigorous analysis.

The model is based on the assumption that ubiquitinated waste material serves as a cross-linker between p62 oligomers of a fixed size $n\ge 3$, where 
each monomer can serve as a binding site for cross-linking.
It describes the evolution of the size of aggregates through the evolution of three parameters $p$, $q$ and $r$, where $p$ represents the number of one-hand bound ubiquitin links in the aggregate (in green in  Figs. \ref{fig:1} and \ref{fig:2}), $q$ represents the number of both-hand bound cross-links 
(in red in Figs. \ref{fig:1} and \ref{fig:2}), and $r$ represents the number of p62$_n$ oligomers in the aggregate (in black in Figs. \ref{fig:1} and \ref{fig:2} with 
$n=5$). 
Since we are interested in large aggregates, the variables $(p,q,r)$ are considered as continuous after an appropriate scaling. The state space is a subset 
of the positive octant of $\mathbb{R}^3$ determined by two constraints: For a connected aggregate the number of two-hand bound cross-links has to be
at least the number of p62 oligomers minus one. In the continuous description this becomes the constraint $q\ge r$. Since the total number of binding sites
on the p62 oligomers in an aggregate is $nr$, the number of free binding sites is equal to $nr-p-2q$, which has to be nonnegative.
The dynamics of an
aggregate is governed by the basic binding and unbinding reactions between cross-linkers and p62 oligomers. Since the reaction rates depend on the state
of the reaction partners and of the aggregate, six different reactions have been considered in \cite{firstarticle}
(see  Figs. \ref{fig:1} and \ref{fig:2}). The models for the reaction rates are based on the law of mass action. However, since the shape of an aggregate is
not described unambiguously by the parameters $(p,q,r)$, some additional empirical modeling assumptions are required.

\begin{figure}[h!] \centering
\includegraphics[width=0.7\textwidth]{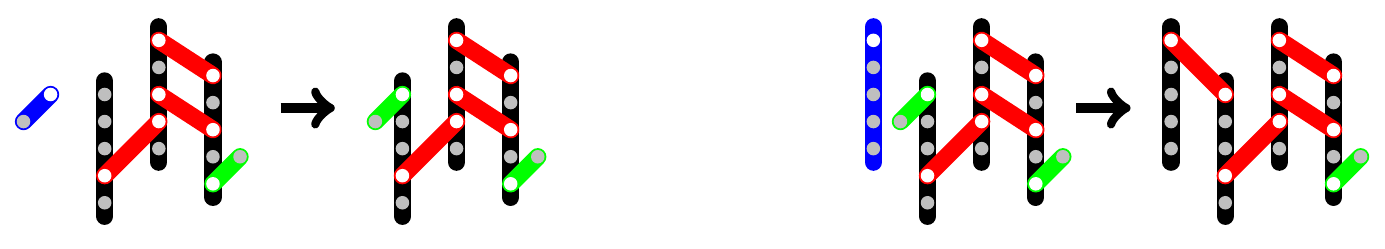}
\label{fig:1}
\caption{Left: Reaction 1: addition of a free cross-linker to the aggregate. Right: Reaction 2: addition of a p62$_5$ oligomer to the aggregate.}
\end{figure}

\begin{figure}[h!] \centering
\includegraphics[width=0.7\textwidth]{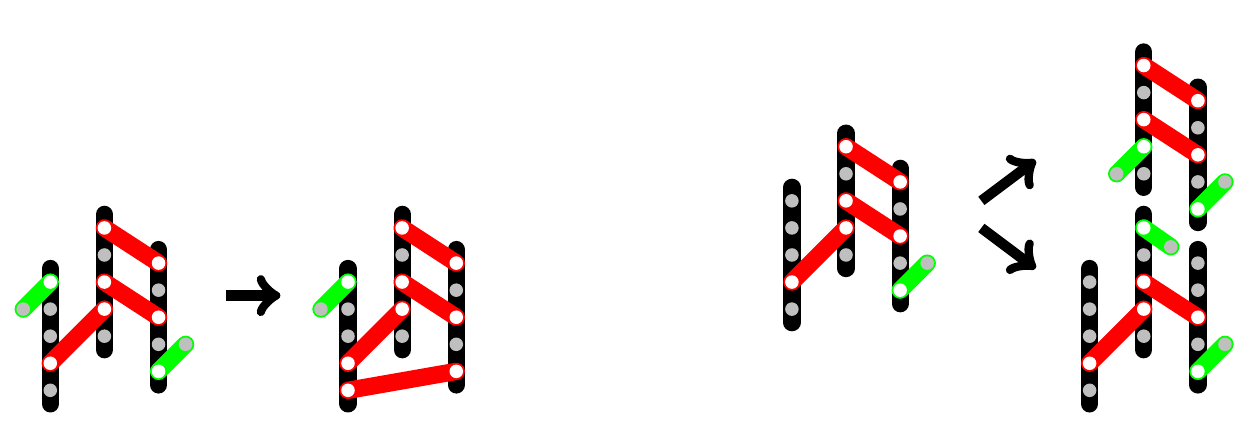}
\caption{Left: Reaction 3 is a rearrangement making the aggregate more cohesive. Right: the breaking of a cross-link bound can lead either to the reverse of Reaction 2 of of Reaction 3.}
\label{fig:2}
\end{figure}

\emph{Reaction 1} is the addition of a free cross-linker to the aggregate. This is a second order reaction with a rate proportional to the concentration
of free cross-linkers and to the number of free binding sites on oligomers. Since the supply of free cross-linkers and oligomers has been modeled as not
limiting in \cite{firstarticle}, the reaction rate is written as $\kappa_1(nr-p-2q)$ with rate constant $\kappa_1$, which can be seen as proportional to the
cross-linker concentration, modeled as constant. Similarly \emph{Reaction 2,} the addition of a free oligomer to the aggregate, is modeled as a first order 
reaction with rate $\kappa_2 p$ proportional to the number of one-hand bound cross-linkers. \emph{Reaction 3} is consolidating the aggregate by 
building an additional cross-link using a so far only one-hand bound cross-linker. Its rate is $\kappa_3 p(nr-p-2q)$. For the reverse of Reaction 1, the rate 
$\kappa_{-1}p$ should not be a surprise. The reverses of Reactions 2 and 3 are actually the same reaction with rate $\kappa_- q$, but with possibly different outcomes. Therefore we write their rate constants as $\kappa_{-2}:= \kappa_- \alpha$ and $\kappa_{-3}:= \kappa_- (1-\alpha)$, with $\alpha=\alpha(q,r)
\in [0,1]$. The reverse of Reaction 2, i.e. loss of an oligomer, never happens in a fully connected aggregate with $nr=2q$. It always happens in a minimally
connected aggregate with $q=r$. This motivates the choice $\alpha = \frac{nr-2q}{(n-2)r}$. Concerning the outcome of this reverse reaction, it has to be taken 
into account that the loss of an oligomer might also mean a loss of one-hand bound cross-links attached to it. This produces the loss term 
$\kappa_- q \frac{(n-1)p}{(n-2)r}$ (see \cite{firstarticle} for details). It is now straightforward to write down the ODE problem governing the evolution of the
state variables:
\begin{equation} \label{eq_1}
\begin{aligned}
  \dot p &= (\kappa_1 - \kappa_3 p) (nr-p-2q) + \kappa_{-}q \left( 1 - \frac{(n-1)p}{(n-2)r}\right) - (\kappa_2 + \kappa_{-1})p \,, &\qquad p(0)=p_0\,,\\
  \dot q &= \kappa_2 p + \kappa_3 p(nr-p-2q) - \kappa_{-}q\,, &\qquad q(0)=q_0\,,  \\
  \dot r &= \kappa_2 p - \kappa_{-} q \alpha(q,r), \qquad\alpha(q,r) = \frac{nr -2q}{(n-2) r}\,, &\qquad r(0)=r_0\,,
  \end{aligned}
 \end{equation}
 with the inequalities 
 \begin{equation} \label{eq_2}
 nr - p - 2q \geq 0 \,, \qquad  q\ge r \,,
 \end{equation}
 implying
 $$
 0 \leq \alpha(q,r) \leq 1 \,. 
 $$
 We recall from \cite[Theorem 1]{firstarticle} that for initial data $p_0,q_0,r_0>0$ satisfying \eqref{eq_2}, which we assume in the following, the initial value
 problem \eqref{eq_1} has a unique, global solution propagating \eqref{eq_2}, the nonnegativity of the components, and in particular
\begin{equation}\label{r-pos}
   r(t),q(t) > 0 \,,\qquad t\ge 0 \,.
\end{equation}

 \begin{figure}[h!] \centering
\includegraphics[width=0.5\textwidth]{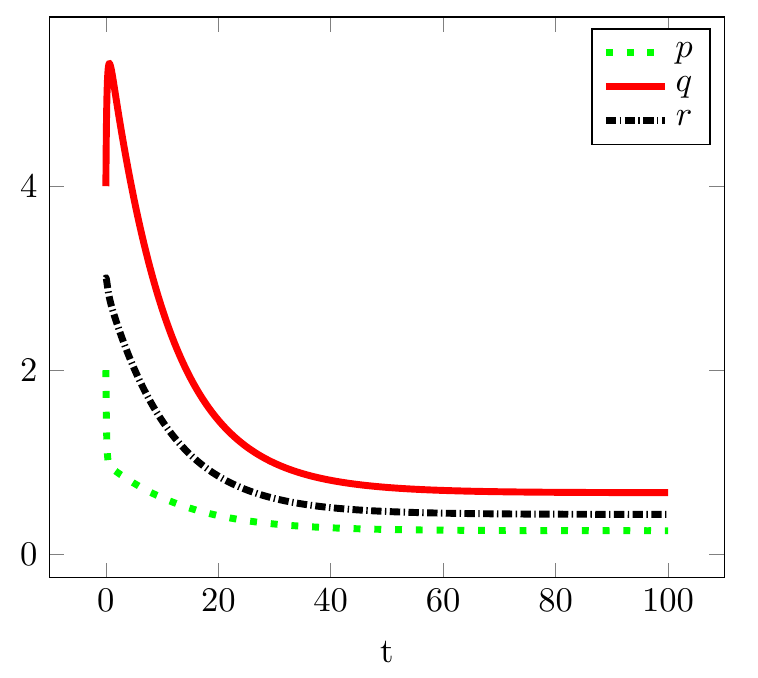}
\caption{Evolution of a state $(p,q,r)$ of initial size $(2,4,3)$ with parameters $n=5$, 
$\kappa_1= \kappa_2=\kappa_3=\kappa_{-1}=1$, and $\kappa_{-}=0.6$, giving $0<\bar\alpha < 1$ and convergence to the nontrivial equilibrium.}
\label{fig:3} 
\end{figure}

\begin{figure}[h!] \centering
\includegraphics[width=0.5\textwidth]{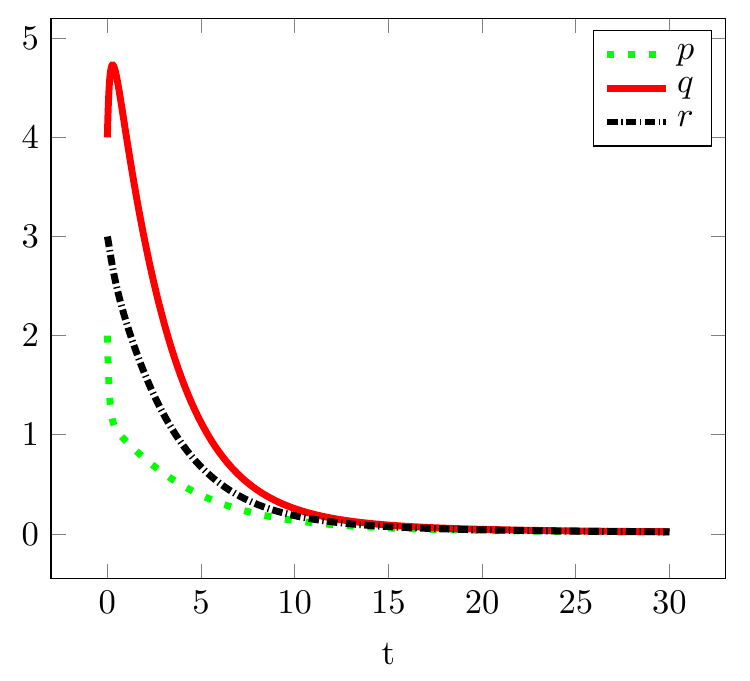}
\caption{Evolution of a state $(p,q,r)$ of initial size $(2,4,3)$ with parameters $n=5$, $\kappa_1= \kappa_2=\kappa_3=\kappa_{-1} = 1$ ,
and $\kappa_{-}=0.93$, giving $\bar\alpha>1$ and convergence to the zero steady state.}
\label{fig:5} 
\end{figure}

\begin{figure}[h!] \centering
\includegraphics[width=0.5\textwidth]{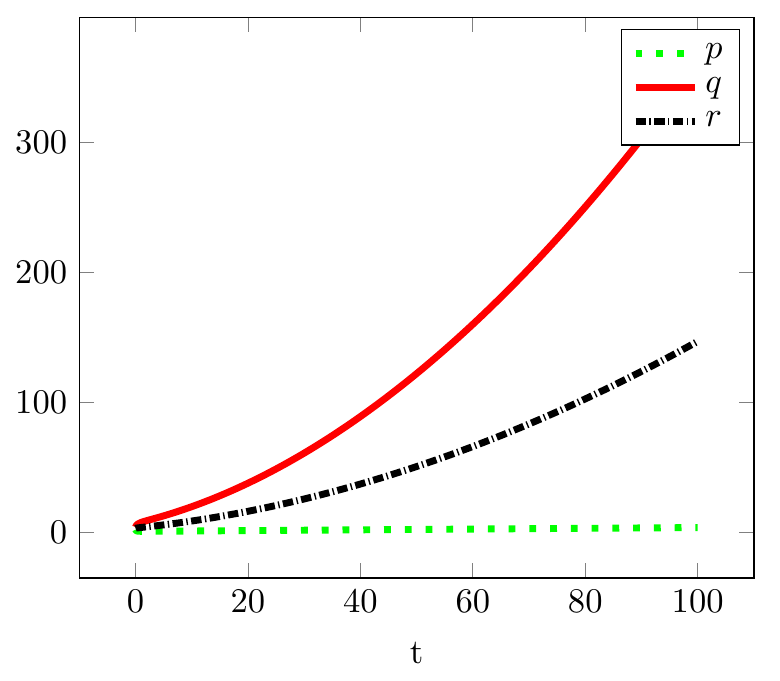}
\caption{Evolution of a state $(p,q,r)$ of initial size $(2,4,3)$ with parameters $n=5$,
$\kappa_1= \kappa_2=\kappa_3=\kappa_{-1} = 1$, and $\kappa_{-}=0.2$, giving $\bar\alpha<0$ and a polynomially growing aggregate.}
\label{fig:4} 
\end{figure}
 
The search for steady states \cite{firstarticle} has suggested a splitting of the parameter space into three regions. Besides the trivial steady state
$(p,q,r)=(0,0,0)$, only one other equilibrium may exist, which can be computed explicitly:
\begin{eqnarray}
& \bar p =\frac{\kappa_- A}{\kappa_2}  \frac{1-\bar\alpha}{\bar\alpha} \,,\qquad \bar q = A \frac{1-\bar\alpha}{\bar\alpha^2} \,, \qquad
  \bar r = \frac{2A}{n-(n-2)\bar\alpha} \frac{1-\bar\alpha}{\bar\alpha^2}\,, \label{ss}\\
&\mbox{with}\quad\bar \alpha = \frac{n}{n-2} + \frac{\kappa_{-1} + \kappa_1 - \sqrt{(\kappa_1 + \kappa_{-1})^2 + 4  \kappa_1 \kappa_2 (n-1)}}{\kappa_{-} (n-1)} \,, \quad A = \frac{2\kappa_1 \kappa_2^2 (n-2)}{\kappa_3 \kappa_- (\kappa_-(n-1)(n - (n-2)\bar\alpha) + 2\kappa_{-1}(n-2))} \,.\nonumber
\end{eqnarray}
Since $\bar\alpha = \alpha(\bar q,\bar r)$ is the equilibrium value of $\alpha$, the nontrivial steady state is relevant only in the parameter region defined
by $0<\bar\alpha<1$. It has been conjectured in \cite{firstarticle} that in this parameter region $(\bar p,\bar q, \bar r)$ is globally attracting, which has been
supported by numerical simulations (see also Fig. \ref{fig:3}). Local stability could in principle be examined by linearization. However, the complexity of the
resulting formulas has been prohibitive.
 
Since $(\bar p,\bar q,\bar r)\to (0,0,0)$ as $\bar\alpha\to 1-$, it seems natural to expect a transcritical bifurcation at $\bar\alpha=1$ with stability of
the trivial steady state for $\bar\alpha>1$. Again the conjecture of global asymptotic stability of $(0,0,0)$ for $\bar\alpha>1$ has been supported by
simulations (see for example Fig. \ref{fig:5}). The right hand sides of \eqref{eq_1} are continuous up to the origin (when considered as an element of the set 
of admissible states), since $0\le\alpha(q,r)\le 1$ and $p/r\le n$. However, their nonsmoothness prohibits a standard local stability or bifurcation analysis. 
The expected local stability behaviour (asymptotic stability for $\bar\alpha>1$, instability for $\bar\alpha<1$) is proven in Section \ref{sec:zero}. The analysis 
is based on a regularizing transformation, which makes the steady state very degenerate, combined with a blow-up analysis \cite{dumortier}.

The fact that the components of the nontrivial equilibrium tend to infinity when $\bar\alpha\to 0+$ suggests that solutions might be unbounded
for $\bar\alpha<0$. In this parameter region approximate solutions with polynomial growth of the form 
\begin{equation}\label{poly-ansatz}
  p(t) = p_1 t+ \textit{o}(t) \,,\qquad q(t) = q_2 t^2 + \textit{o}(t^2) \,,\qquad r(t) = \frac{2q_2}{n} t^2+ \textit{o}(t^2) \,,\qquad \mbox{as } t\to\infty \,,
\end{equation}
have been constructed in \cite{firstarticle} by formal asymptotic methods. It has also been shown that no
other growth behaviour (polynomial with other powers or exponential) should be expected, and the conjecture that all solutions have the constructed 
asymptotic behaviour is again verified by simulations (see Fig. for example \ref{fig:4}). We justify the formal asymptotics in Section \ref{sec:poly}. A variant 
of Poincar\'e compactification \cite{Poincare} produces a problem with bounded solutions and with three different time scales, which is analyzed by singular 
perturbation methods \cite{Fenichel}.
The final result is existence and semi-local stability of the polynomially growing solutions, where 'semi-local' means that initial data have to be large
with relative sizes as in \eqref{poly-ansatz}.

The article is concluded by a discussion section about biological interpretation of our results as well as perspectives. 

\section{Local stability of the zero steady state}\label{sec:zero}

In this section, we study under which conditions small aggregates tend to disaggregate. This is equivalent to studying the stability of the zero-steady-state 
$(p,q,r)=(0,0,0)$ of the system \eqref{eq_1}. Because of the appearance of the ratios $\frac{p}{r}$ and $\frac{q}{r}$, the Jacobian of the right hand side of
\eqref{eq_1} is not defined there. As a consequence of \eqref{r-pos} the regularizing transformation $\tau := \int_0^t r(s)^{-1} \mathrm{d}s$
is well defined and leads to 
\begin{equation} \label{eq_1-regularized}
\begin{aligned}
  \frac{dp}{d\tau} &= r(\kappa_1 - \kappa_3 p) (nr-p-2q) + \kappa_{-}q \left( r - \frac{(n-1)p}{n-2}\right) - (\kappa_2 + \kappa_{-1})pr \,, \\
  \frac{dq}{d\tau} &= \kappa_2 pr + \kappa_3 pr(nr-p-2q) - \kappa_{-}qr\,,  \\
  \frac{dr}{d\tau} &= \kappa_2 pr - \kappa_{-} q \frac{nr -2q}{n-2}\,.
  \end{aligned}
 \end{equation}
The regularization came at the expense that the zero steady state is degenerate in \eqref{eq_1-regularized}, since the right hand side is of second order in terms of the densities.
 A classical approach to study such non-hyperbolic points is \emph{blow-up} \cite{dumortier}. The standard blow-up transformation would be the
 introduction of spherical coordinates, blowing up the origin to the part of $\mathbb{S}^2$ in the positive octant. It is also common to work with \emph{charts}
 instead. In our case this preserves the polynomial form of the right hand side. Although the charts in the different coordinate directions are equivalent,
 since the state space is a subset of the positive octant, it has turned out to be convenient to use the $q$-chart, whence
the blow-up transformation $(p,q,r) \to (p_1,q_1,r_1)$ is given by 
\begin{equation}
  p= p_1q_1\,, \qquad q=q_1\,,\qquad r=r_1 q_1 \,,
\end{equation} 
and we also introduce another change of time scale: $T := \int_0^\tau q_1(\sigma)d\sigma$, again justified by \eqref{r-pos}, leading to
\begin{eqnarray} 
 \frac{dq_1}{dT} &=& q_1 r_1\left( \kappa_2 p_1 - \kappa_- \right) + \kappa_3 p_1 r_1 q_1^2(nr_1-p_1-2)\,,\nonumber\\
 \frac{dp_1}{dT} &=& r_1(\kappa_1 - \kappa_3 p_1 q_1)(nr_1-p_1-2) + \kappa_- \left(r_1 - \frac{n-1}{n-2}p_1\right) - (\kappa_2 + \kappa_{-1})p_1r_1
      - p_1 r_1(\kappa_2 p_1 - \kappa_-) \nonumber\\
      && -\kappa_3 p_1^2 r_1 q_1 (nr_1 - p_1 - 2) \,,\label{system-T}\\
 \frac{dr_1}{dT} &=& (1-r_1)\left( \kappa_2 p_1 r_1 + \kappa_- \left( \frac{2}{n-2} - r_1\right) \right) - \kappa_3 p_1 r_1^2 q_1 (nr_1 - p_1 - 2)\,.\nonumber
\end{eqnarray}
The invariant manifold $q_1=0$ of this system corresponds to the zero steady state of \eqref{eq_1}.
The inequalities \eqref{eq_2} become
$$
   r_1 \le 1 \,,\qquad 0\le p_1 \le nr_1-2 \,,
$$
in terms of the new variables, i.e. the dynamics of $(p_1,r_1)$ remains in the triangle depicted in Fig. \ref{fig:6}. Since $r_1\ge 2/n$, we conclude from the
equation for $q_1$ that the invariant manifold is locally exponentially attracting in the region to the left of the line $p_1 = \kappa_-/\kappa_2$. Since
$p_1\le n-2$, the inequality $\kappa_- > (n-2)\kappa_2$ already implies local asymptotic stability of the invariant manifold $q_1=0$ of \eqref{system-T}
and therefore of the zero steady state of \eqref{eq_1}. Note that $\kappa_- > (n-2)\kappa_2$ also implies $\bar\alpha > 1$ for $\bar\alpha$
defined by \eqref{ss}.

\begin{figure}[h!] \centering
\includegraphics[width=0.4\textwidth]{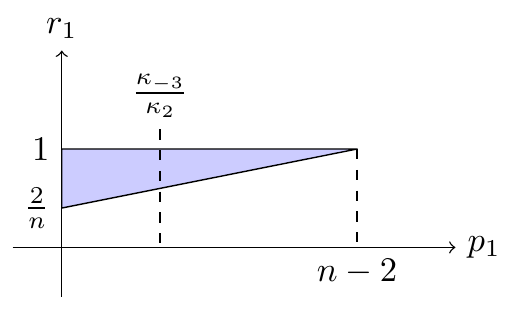}
\caption{The dynamics in the $(p_1,r_1)$-plane is limited to the shaded triangle because of the inequalities \eqref{eq_2}.}
\label{fig:6} 
\end{figure}

In the following we therefore consider the case $\kappa_- \le (n-2)\kappa_2$ (see Fig. \ref{fig:6}) and $\bar\alpha > 1$, where the latter is equivalent to
\begin{equation}\label{alpha-bigger-1}
  \kappa_1\kappa_2(n-2)^2 < \kappa_-(\kappa_1 + \kappa_{-1})(n-2) + \kappa_-^2(n-1) \,, 
\end{equation}
see also \cite[Equ. (26)]{firstarticle}. The flow on the invariant 
manifold $q_1=0$ of \eqref{system-T} is governed by the system
\begin{eqnarray} 
  \frac{dp_1}{dT} &=& r_1\kappa_1 (nr_1-p_1-2) + \kappa_- \left(r_1 - \frac{n-1}{n-2}p_1\right) - (\kappa_2 + \kappa_{-1})p_1r_1
      - p_1 r_1(\kappa_2 p_1 - \kappa_-) \nonumber\\
 \frac{dr_1}{dT} &=& (1-r_1)\left( \kappa_2 p_1 r_1 + \kappa_- \left( \frac{2}{n-2} - r_1\right) \right) \,.\label{q1=0}
\end{eqnarray}
In the right part of the triangle, i.e. for
$$
   r_1 \le 1 \,,\qquad \frac{\kappa_-}{\kappa_2}\le p_1 \le nr_1-2 \,,
$$
we have
\begin{eqnarray*}
   \frac{dp_1}{dT} &\le& r_1\kappa_1 \left( n - \frac{\kappa_-}{\kappa_2} - 2\right) + \kappa_- \left( r_1 - \frac{n-1}{n-2}\,\frac{\kappa_-}{\kappa_2}\right)
   - (\kappa_2 + \kappa_{-1})\frac{\kappa_-}{\kappa_2}r_1 \\
   &=& \frac{r_1\bigl(\kappa_1\kappa_2(n-2)^2 - \kappa_-(\kappa_1 + \kappa_{-1})(n-2)\bigr) - \kappa_-^2(n-1)}{\kappa_2(n-2)} 
   < \frac{(r_1-1)\kappa_-^2(n-1)}{\kappa_2(n-2)} \le 0 \,,
\end{eqnarray*}
where the strict inequality is due to \eqref{alpha-bigger-1}. This implies that all trajectories reach the left part of the triangle, i.e. $p_1 < \kappa_-/\kappa_2$
in finite time. 

By standard regular perturbation theory the dynamics for the full system \eqref{system-T}, when started close to the invariant manifold $q_1=0$, remains
close to the dynamics on the invariant manifold for finite time, until the region $p_1 < \kappa_-/\kappa_2$ is reached, where the invariant manifold is
attracting. Thus $q=q_1$ tends to zero and, by the inequalities \eqref{eq_2}, the same is true for $p$ and $r$.

Now we consider the case $\bar\alpha<1$, i.e. the opposite of inequality \eqref{alpha-bigger-1}, and look for a steady state on the invariant manifold 
$r_1=1$ of the system \eqref{q1=0}. Since
\begin{eqnarray*}
   \frac{dp_1}{dT}\Bigm|_{r_1=1,p_1=\kappa_-/\kappa_2} &=& \frac{\kappa_1\kappa_2(n-2)^2 - \kappa_-(\kappa_1 + \kappa_{-1})(n-2) - \kappa_-^2(n-1)}
       {\kappa_2(n-2)} >0 \,,\\
 \frac{dp_1}{dT}\Bigm|_{r_1=1,p_1=n-2} &=& -(n-2)(n\kappa_2 + \kappa_{-1}) < 0 \,,      
 \end{eqnarray*}
 there exists a steady state $(p_1,r_1) = (p_1^*,1)$ with $\kappa_-/\kappa_2 < p_1^* < n-2$, which is stable under the flow along $r_1=1$. On the other 
 hand
$$
   \frac{1}{1-r_1}\, \frac{dr_1}{dT}\Bigm|_{r_1=1,p_1=p_1^*} = \left( \kappa_2 p_1^* + \kappa_- \left( \frac{2}{n-2} - 1\right) \right) > \frac{2\kappa_-}{n-2} >0 \,,
$$
which implies stability of the manifold $r_1=1$ close to the steady state, and therefore stability of the steady state. The existence of a stable steady state 
on the invariant manifold $q_1=0$ of \eqref{system-T} in the region, where the manifold is repulsive, implies instability of the manifold and therefore also 
of the zero steady state of \eqref{eq_1}.
This completes the proof of the main result of this section.

\begin{thm}
Let $\bar\alpha$ be defined by \eqref{ss}. Then the steady state $(0,0,0)$ of the system \eqref{eq_1} is locally asymptotically stable for $\bar\alpha>1$
and unstable for $\bar\alpha<1$. 
\end{thm}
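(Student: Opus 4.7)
The plan is to handle the non-smoothness of \eqref{eq_1} at the origin through a two-stage desingularization: first the time rescaling $\tau=\int_0^t r(s)^{-1}ds$ (well defined by \eqref{r-pos}), which makes the right-hand side polynomial but leaves the origin fully degenerate, then the $q$-directional blow-up $p=p_1 q_1$, $q=q_1$, $r=r_1 q_1$ together with a further rescaling $T=\int_0^\tau q_1(\sigma)\,d\sigma$. In the resulting system \eqref{system-T}, the manifold $q_1=0$ is invariant and collapses onto the origin of the original problem. The leading term of the $q_1$-equation, $q_1 r_1(\kappa_2 p_1-\kappa_-)$, shows that this manifold is exponentially attracting where $p_1<\kappa_-/\kappa_2$ and exponentially repelling where $p_1>\kappa_-/\kappa_2$. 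The constraints \eqref{eq_2} confine $(p_1,r_1)$ to a bounded triangle, and the sub-case $\kappa_->(n-2)\kappa_2$ is immediate because the triangle then lies entirely in the attracting strip (and that inequality already implies $\bar\alpha>1$); the real work is reserved for $\kappa_-\le(n-2)\kappa_2$, where the line $p_1=\kappa_-/\kappa_2$ cuts the triangle into two pieces.

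For the stable case $\bar\alpha>1$, I would estimate $dp_1/dT$ from the planar system \eqref{q1=0} on the right-hand part of the triangle, i.e.\ where $p_1\ge\kappa_-/\kappa_2$. By inserting this lower bound on $p_1$ into the positive terms of the right-hand side and using $r_1\le 1$, one obtains an upper bound on $dp_1/dT$ whose sign is dictated exactly by the combination $\kappa_1\kappa_2(n-2)^2-\kappa_-(\kappa_1+\kappa_{-1})(n-2)-\kappa_-^2(n-1)$, which is negative precisely under \eqref{alpha-bigger-1}. This forces every planar trajectory into the attracting strip in finite time; a regular perturbation argument then propagates the conclusion to the full system \eqref{system-T}, giving $q=q_1\to 0$, and the inequalities \eqref{eq_2} drag $p$ and $r$ to zero with it.

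For the unstable case $\bar\alpha<1$, the plan is to exhibit a stable planar equilibrium of \eqref{q1=0} that sits inside the repelling strip, which will contradict stability of the origin. The line $r_1=1$ is invariant for \eqref{q1=0}; evaluating $dp_1/dT$ there at $p_1=\kappa_-/\kappa_2$ and at $p_1=n-2$ yields opposite signs by the reversed form of \eqref{alpha-bigger-1}, producing via the intermediate value theorem a fixed point $(p_1^*,1)$ with $\kappa_-/\kappa_2<p_1^*<n-2$ that is stable along $r_1=1$; transverse attraction of $r_1=1$ near this point follows from the sign of $(1-r_1)^{-1}dr_1/dT$ there, which is manifestly positive. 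Since $(p_1^*,1)$ lies in the repelling strip of $q_1=0$, it destabilises the manifold and hence the zero steady state in the original coordinates.

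The main obstacle I anticipate is precisely the matching of algebraic signs to the implicit formula in \eqref{ss}: ensuring that the threshold \eqref{alpha-bigger-1} is exactly what is needed at the boundary $p_1=\kappa_-/\kappa_2$ to reverse the direction of $dp_1/dT$ between the two regimes, and that nothing is lost when the a priori bounds $r_1\le 1$, $p_1\le n-2$ are inserted into the estimate. Once this bookkeeping is carried out, transferring the conclusions back through the blow-up and the two time rescalings is immediate, because both rescalings are monotone increasing diffeomorphisms by \eqref{r-pos}.
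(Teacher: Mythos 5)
Your proposal follows essentially the same route as the paper's proof: the regularizing time change $\tau=\int_0^t r(s)^{-1}ds$, the $q$-chart blow-up with the additional rescaling $T=\int_0^\tau q_1\,d\sigma$, the sign analysis of $\kappa_2 p_1-\kappa_-$ on the invariant manifold $q_1=0$ restricted to the triangle, the estimate of $dp_1/dT$ on the right part of the triangle under \eqref{alpha-bigger-1} followed by a regular perturbation argument, and, for $\bar\alpha<1$, the stable equilibrium $(p_1^*,1)$ on the invariant line $r_1=1$ located in the repelling strip. The argument is correct and matches the paper's proof in all essential steps.
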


\section{Polynomially growing regime}\label{sec:poly}

The goal of this section is a rigorous justification of the formal asymptotics \eqref{poly-ansatz} (see \cite{firstarticle}) under the assumption 
$\bar\alpha<0$ with $\bar\alpha$ defined in \eqref{ss}, i.e.
\begin{equation}\label{alpha-smaller-0}
  4\kappa_1\kappa_2(n-2)^2 > n\kappa_-\bigl(2(\kappa_1 + \kappa_{-1})(n-2) + \kappa_-n(n-1) \bigr)\,, 
\end{equation}
see also \cite[Equ. (27)]{firstarticle}.

Considering \eqref{poly-ansatz}, it would be natural to write an equation for $p(t)/t$. It is easily seen from \eqref{eq_1} that its 
derivative contains terms of the order of $t^2$. Similarly the derivative of $q(t)/t^2$ has contributions up to the order of $t$, whereas the derivative of 
$r(t)/t^2$ is a combination of terms bounded as $t\to\infty$. This shows that we are confronted with a problem with different time scales, which will put us
into the realm of \emph{singular perturbation theory} (see, e.g. \cite{Fenichel,Szmolyan}). The leading order term in the fastest equation, i.e. the 
$p$-equation, is $-\kappa_3 p (nr-2q)$, from which it has been concluded in \cite{firstarticle} that $nr(t)\approx 2q(t)$ as $t\to\infty$. In a standard
singular perturbation setting, it should be possible to express $p(t)$ from this relation. Since this is not the case, our problem belongs to the family of
\emph{singular} singularly perturbed problems (see e.g. \cite{Schmeiser}) which, however, can be transformed to the standard regular form in many cases.

The introduction of $p(t)/t$, $q(t)/t^2$, $r(t)/t^2$, as new variables would lead to a study of bounded solutions, but to a non-autonomous system. 
We shall use a variant of the \emph{Poincar\'e compactification} method \cite{Poincare} instead.

The previous observations led us to the introduction of the new variables
\begin{equation*} 
u= \frac{p}{\sqrt{p+q}} \,,\qquad v=\frac{2p+2q -nr}{\sqrt{p+q}} \,,\qquad w=\frac{1}{\sqrt{p+q}} \,,
\end{equation*}
where we expect that $w(t)$ tends to zero as $t^{-1}$, and that $u(t)$ and $v(t)$ converge to nontrivial limits. Since this coordinate change produces 
a singularity at $w=0$, we also change the time variable by $\tau = \int_0^t ds/w(s)$.
In terms of the new variables system \eqref{eq_1} becomes
\begin{eqnarray}
  \frac{du}{d\tau} &=&  (\kappa_1 w - \kappa_3 u)(u-v) + \kappa_-(1-uw)\left( 1 - \frac{n(n-1)uw}{(n-2)(2-vw)}\right) - (\kappa_2+\kappa_{-1})uw \nonumber\\
             &&    - uw^2A(u,v,w) \,,\nonumber\\
  \frac{dv}{d\tau} &=& w\left(2\kappa_1(u-v) - (2\kappa_{-1} + n\kappa_2)u + \kappa_- (1-uw)n \frac{2u-nv}{(n-2)(2-vw)}\right) - vw^2A(u,v,w) \,,\label{syst-uvw}\\
  \frac{dw}{d\tau} &=& - w^3 A(u,v,w) \,,\quad A(u,v,w) := \frac{1}{2}\left(\kappa_1(u-v) - \kappa_{-1}u - \kappa_-(1-uw)\frac{n(n-1)u}{(n-2)(2-vw)}\right) \,.
    \nonumber
\end{eqnarray}
Our goal is to prove that solutions converge to a steady state $(u^*,v^*,w^*)$ with $w^*=0$, which obviously has to satisfy 
$- \kappa_3 u^*(u^*-v^*) + \kappa_-=0$, implying 
\begin{equation}\label{def-U}
u^* = U(v^*) := \frac{1}{2} \left(  v^*  +
   \sqrt{(v^*)^2 + 4\kappa_-/\kappa_3}\right) \,,
\end{equation}
since we need $u^*>0$. We intend to show that $v^*$ is determined from the requirement that the large parenthesis in the $v$-equation vanishes.
The argument is essentially that for small values of $w$, the variable $v$ evolves much faster than $w$.

In order to make the slow-fast structure of this system more apparent and to allow the application of basic results from singular perturbation theory,
we assume that the initial value for $w$ is small and define $\ep := (p_0+q_0)^{-1/2}\ll 1$ and the rescaled variable $W= w/\ep$, leading to
\begin{eqnarray}
  \frac{du}{d\tau} &=& - \kappa_3 u(u-v) + \kappa_- + O(\ep) \,,\nonumber\\
  \frac{dv}{d\tau} &=& \ep W\left(2\kappa_1(u-v) - (2\kappa_{-1} + n\kappa_2)u + \kappa_- n \frac{2u-nv}{2(n-2)}\right) +O(\ep^2) \,,\label{syst-uvW}\\
  \frac{dW}{d\tau} &=& - \ep^2 W^3 A(u,v,0) + O(\ep^3)  \,.\nonumber
\end{eqnarray}
The initial data are denoted by 
$$
   u(0)=u_0 :=  \frac{p_0}{\sqrt{p_0+q_0}} >0\,,\qquad v(0)=v_0 := \frac{2p_0+2q_0 -nr_0}{\sqrt{p_0+q_0}}\,,\qquad   W(0)=1\,,
$$
where in the following we consider $u_0$ and $v_0$ as fixed when $\ep\to 0$.
This is a singular perturbation problem in standard form, where $\tau$ plays the role of an initial layer variable. We pass to the limit $\ep\to 0$ 
to obtain the initial layer problem
\begin{eqnarray}
  \frac{d\hat u}{d\tau} &=& - \kappa_3 \hat u(\hat u-\hat v) + \kappa_-   \,,\label{u-layer}\\
  \frac{d\hat v}{d\tau} &=& \frac{d\hat W}{d\tau} = 0 \,,\nonumber
\end{eqnarray}
subject to the initial conditions. By the qualitative behaviour of the right hand side of the first equation, the solution satisfies $\hat v(\tau)=v_0$, 
$\hat W(\tau)=1$, and
$$
   \lim_{\tau\to\infty} \hat u(\tau) = U(v_0)  \,,
$$
with exponential convergence, where $U$ has been defined in \eqref{def-U}. The equation $u=U(v)$ defines the so called \emph{reduced manifold}. 
Since it is exponentially attracting, the 
Tikhonov theorem \cite{Tikhonov} (or rather its extension \cite{Fenichel}) implies that, after the initial layer, i.e. when written in terms of the slow variable 
$\sigma = \ep\tau$, the solution trajectory
remains exponentially close to the \emph{slow manifold}, which is approximated by the reduced manifold, and the flow on the slow manifold satisfies
\begin{eqnarray}
  \frac{dv}{d\sigma} &=& W\left(2\kappa_1(U(v)-v) - (2\kappa_{-1} + n\kappa_2)U(v) + \kappa_- n \frac{2U(v)-nv}{2(n-2)}\right) +O(\ep) \,,\nonumber\\
  \frac{dW}{d\sigma} &=& - \ep W^3 A(U(v),v,0) + O(\ep^2)  \,,\label{syst-vW}
\end{eqnarray}
with $v(0)=v_0$, $W(0)=1$. This is again a singular perturbation problem in standard form, where now $\sigma$ is the initial layer variable.
We repeat the above procedure and consider the limiting layer problem
\begin{eqnarray}
  \frac{d\tilde v}{d\sigma} &=& \tilde W\left(2\kappa_1(U(y\tilde v)-\tilde v) - (2\kappa_{-1} + n\kappa_2)U(\tilde v) 
  + \kappa_- n \frac{2U(\tilde v)-n\tilde v}{2(n-2)}\right)  \,,\label{v-layer}\\
  \frac{d\tilde W}{d\sigma} &=& 0  \,.\nonumber
\end{eqnarray}
The observations
$$
   U(-\infty) = 0 \,,\qquad U(\infty)=\infty \,,\qquad 0< U'(v) < 1 \,,
$$
suffice to show that the right hand side of the first equation is a strictly decreasing function of $v$ with a unique zero $v^*$, which can actually be 
computed explicitly:
$$
  v^* = B\left(\kappa_1 - \kappa_{-1} - \frac{n}{2}\kappa_2 + \frac{n}{2(n-2)}\kappa_-\right)
$$ $$
  \mbox{with}\quad B = 2\sqrt{\frac{\kappa_-}{\kappa_3}} \left( \frac{n^3}{4(n-2)}\kappa_-^2 + 4\kappa_1 \kappa_{-1} + 2n \kappa_1\kappa_2
  + n\kappa_1 \kappa_- + \frac{n^2}{n-2} \kappa_{-1}\kappa_- + \frac{n^3}{2(n-2)\kappa_2 \kappa_-}\right)^{-1/2}
$$
The solution of \eqref{v-layer} with $\tilde v(0)=v_0$ satisfies $\lim_{\sigma\to\infty} \tilde v(\sigma) = v^*$ with exponential convergence. Another application 
of the Tikhonov theorem shows that the slowest part of the dynamics with $t=O(\ep^{-1})$ can be approximated by
\begin{equation}\label{slow-model}
  \frac{dW}{d\sigma} = - \ep W^3 A^* \,,\qquad W(0)=1 \,,
\end{equation}
with 
\begin{equation}\label{A-star}
  A^* := A(U(v^*),v^*,0) = \frac{nB}{16(n-2)^2}(4(n-2)^2\kappa_1\kappa_2 - 2n(n-2)\kappa_-(\kappa_1+\kappa_{-1}) - n^2(n-1)\kappa_-^2) >0 \,,
\end{equation}
by \eqref{alpha-smaller-0}. This gives the approximation
$$
   W(\sigma) = (1+2 A^*\ep\sigma)^{-1/2} \,.
$$
The results of \cite{Fenichel} imply that the approximations are accurate with errors of order $\ep$ uniformly with respect to time.

\begin{thm}\label{thm:sing-pert}
Let \eqref{alpha-smaller-0} hold. Then, for $\ep>0$ small enough, the solution of \eqref{syst-uvw} with initial conditions
$$
    u(0) = u_0>0 \,,\qquad v(0) = v_0 \in\mathbb{R} \,, \qquad w(0)=\ep \,,
$$
satisfies
\begin{eqnarray*}
   u(\tau) &=& \hat u(\tau) - U(v_0) + U(\tilde v(\ep\tau)) + O(\ep) \,,\\
   v(\tau) &=& \tilde v(\ep\tau) + O(\ep) \,,\\
   w(\tau) &=& \ep(1 + 2A^* \ep^2\tau)^{-1/2} + O(\ep^2) \,,
\end{eqnarray*}
uniformly in $\tau\ge 0$, where $U$ is given in \eqref{def-U}, $\hat u$ solves \eqref{u-layer}, $\tilde v$ solves \eqref{v-layer}, and $A^*$ is given in
\eqref{A-star}.
\end{thm}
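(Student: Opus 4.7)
The strategy is to apply Fenichel's geometric singular perturbation theorem in two successive stages, matching the three time scales inherent in \eqref{syst-uvW}. In the fastest scale, $u$ is the fast variable and $(v,W)$ are slow, with critical manifold $\mathcal{M}_0=\{u=U(v)\}$. Normal hyperbolicity and attractivity follow from
\[
  \frac{\partial}{\partial u}\bigl(-\kappa_3 u(u-v)+\kappa_-\bigr)\Bigm|_{u=U(v)} = -\kappa_3(2U(v)-v) = -\kappa_3\sqrt{v^2 + 4\kappa_-/\kappa_3}<0,
\]
which is bounded away from zero on any bounded set of $v$. A first application of Fenichel's theorem produces a locally invariant slow manifold $\mathcal{M}_\ep=\{u=U(v)+O(\ep)\}$ together with the initial-layer approximation $\hat u(\tau)$ solving \eqref{u-layer}, which converges exponentially to $U(v_0)$.

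Once the solution has reached $\mathcal{M}_\ep$, I would pass to the slow time $\sigma=\ep\tau$ and work with the reduced system \eqref{syst-vW}, which is itself a singular perturbation problem with $v$ fast and $W$ slow. Its critical manifold is the line $\{v=v^*\}$; attractivity holds because the right-hand side of \eqref{v-layer} is strictly decreasing in $v$, using $0<U'<1$ and the signs of its coefficients, so the derivative at $v^*$ is strictly negative and uniformly bounded away from zero. A second application of Fenichel's theorem then yields an invariant inner manifold near $v=v^*$ and an intermediate-layer approximation $\tilde v(\sigma)\to v^*$ exponentially. On this innermost manifold, $W$ obeys \eqref{slow-model}, which integrates explicitly to $(1+2A^*\ep\sigma)^{-1/2}$, giving the third line of the theorem after restoring $\tau$. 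The composite approximation for $u$ is then standard matched asymptotics: $\hat u(\tau)$ captures the initial layer, $U(\tilde v(\ep\tau))$ captures the outer behaviour on the slow manifold, and subtracting $U(v_0)$ removes their common overlap.

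The main obstacle is uniformity of the $O(\ep)$ error on the full half-line $\tau\ge 0$, since classical Fenichel estimates hold only on compact intervals in the slow time. I would overcome this by exploiting the global attractivity properties established at each scale: $\hat u\to U(v_0)$ and $\tilde v\to v^*$ at uniform exponential rates, while $W$ decreases monotonically to zero by \eqref{slow-model} together with positivity of $A^*$, which follows from \eqref{alpha-smaller-0}. Because each reduced flow targets a globally attracting sink, local Fenichel estimates can be propagated over successive compact intervals without accumulation of error, yielding a uniform $O(\ep)$ bound on $[0,\infty)$. A minor additional check is that the full right-hand sides in \eqref{syst-uvw}, notably the denominator $2-vw$, remain smooth along the trajectory; this is automatic since $w=\ep W\to 0$ and $v$ stays near $v^*$ throughout.
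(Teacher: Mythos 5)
Your proposal is correct and follows essentially the same route as the paper: a two-stage Tikhonov/Fenichel reduction with the layer problem \eqref{u-layer} for $u$, the intermediate layer \eqref{v-layer} for $v$ on the slow manifold $u=U(v)$, the explicitly integrable slowest dynamics \eqref{slow-model} for $W$, and the standard composite (matched-asymptotics) formula for $u$. Your explicit verification of normal hyperbolicity and your remarks on propagating the $O(\ep)$ bound to $\tau\in[0,\infty)$ via the global attractivity of the limiting states supply slightly more detail than the paper, which simply appeals to \cite{Fenichel} for the uniform error estimate.
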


Actually more can be deduced. In terms of the original time variable $t$, the equation for $w$ in \eqref{syst-uvw} becomes
\begin{equation}\label{dot-w}
   \dot w = -w^2 A(u,v,w) \,.
\end{equation}
Under the assumptions of Theorem \ref{thm:sing-pert}, $A(u,v,w)$ is uniformly close to the positive constant $A^*$ and therefore uniformly positive
for large enough $t$.
This implies that $w$ tends to zero as $t\to\infty$. The slow manifold of the system \eqref{syst-vW} reduces to the steady state $(v,W)=(v^*,0)$ for
$W=0$. Therefore $v$ tends to $v^*$ as $t\to\infty$. Analogously, the slow manifold of \eqref{syst-uvW} reduces to the steady state $(u,v,W)=
(u^*=U(v^*),v^*,0)$ at $W=0$, implying convergence of $u$ to $u^*$. This in turn implies convergence of $A(u,v,w)$ to $A^*$, which can be used
in \eqref{dot-w}.

\begin{cor}\label{cor:1}
Let the assumptions of Theorem \ref{thm:sing-pert} hold. Then
$$
  \lim_{t\to\infty} u(t) = u^* \,,\qquad \lim_{t\to\infty} v(t) = v^* \,,\qquad w(t) = \frac{1}{A^* t} + O\left( \frac{1}{t^2}\right) \quad\mbox{as } t\to\infty \,.
$$
\end{cor}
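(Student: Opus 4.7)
The plan is to bootstrap from Theorem~\ref{thm:sing-pert} in three stages: first deduce that $w(t)\to 0$, then upgrade the $O(\ep)$ proximity to exact limits $u\to u^*$, $v\to v^*$, and finally extract the precise rate of decay of $w$ from equation \eqref{dot-w}. For the first stage, Theorem~\ref{thm:sing-pert} together with continuity of $A$ and positivity of $A^*$ from \eqref{A-star} gives, for $\ep$ small enough, $A(u,v,w)\ge A^*/2$ uniformly for $\tau$ beyond some threshold. Equation~\eqref{dot-w} then forces $\dot w\le -(A^*/2)w^2$, which integrates to $w(t)=O(1/t)$, and in particular $w(t)\to 0$.

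The second stage extracts the exact limits of $u$ and $v$. The set $\{w=0\}$ is an invariant plane of \eqref{syst-uvw} containing the one-parameter family of steady states $\{(U(v),v,0):v\in\mathbb{R}\}$, on which $(u^*,v^*,0)$ sits. The system carries a three-time-scale structure with $u$ fast, $v$ intermediate and $w$ slowest, already displayed in \eqref{syst-uvW} and \eqref{syst-vW}. Applying Fenichel's theorem first to \eqref{syst-uvW} produces an invariant, exponentially attracting slow manifold $M_\ep$ close to $\{u=U(v)\}$; restricted to $M_\ep$, a second application to the resulting $(v,W)$-subsystem yields an invariant, exponentially attracting super-slow manifold $M_\ep^{ss}$ close to $\{u=u^*,v=v^*\}$. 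Since $(u^*,v^*,0)$ is a genuine equilibrium of the full system lying on $M_\ep^{ss}\cap\{w=0\}$, and trajectories in our regime are drawn onto $M_\ep^{ss}$ well before $w$ reaches zero, we obtain exact convergence $u(t)\to u^*$ and $v(t)\to v^*$, rather than merely the $O(\ep)$ closeness given directly by Theorem~\ref{thm:sing-pert}.

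For the final stage, set $y:=1/w$ so that \eqref{dot-w} becomes $\dot y=A(u,v,w)=A^*+h(t)$ with $h(t)\to 0$ by stage two and continuity. Both $\tau$ and $\sigma$ grow like $t^2$ for large $t$, so the exponential decay of $u-u^*$ and $v-v^*$ on the fast and intermediate scales translates to faster-than-polynomial decay in $t$, while $|w|=O(1/t)$; integration of $\dot y$ and inversion then yield the expansion $w(t)=1/(A^*t)+O(1/t^2)$ claimed in the corollary. The principal obstacle of the argument lies in stage two: the $O(\ep)$ error in Theorem~\ref{thm:sing-pert} does not by itself imply exact limits, so one must exploit the invariance of the Fenichel slow and super-slow manifolds passing through the steady state $(u^*,v^*,0)$, combined with their exponential attractivity, to upgrade $O(\ep)$-closeness into genuine convergence.
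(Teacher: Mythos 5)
Your argument follows the paper's own proof essentially step for step: first $w\to 0$ from the uniform positivity of $A$ near $A^*$ in \eqref{dot-w}, then exact convergence of $u$ and $v$ from the collapse of the nested slow manifolds of \eqref{syst-uvW} and \eqref{syst-vW} onto the equilibrium $(u^*,v^*,0)$ at $w=0$, and finally the decay rate of $w$ by integrating \eqref{dot-w} with $A(u,v,w)\to A^*$. You are in fact somewhat more explicit than the paper about the last step (the substitution $y=1/w$ and the integrability of the error terms), so this is a faithful reconstruction of the intended argument.
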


Finally, we reformulate these results in terms of the original variables, verifying the formal asymptotics of \cite{firstarticle} for initial data, which
are in a sense already 'close enough' to the polynomially growing solutions.
\begin{thm}
Let \eqref{alpha-smaller-0} hold, let $c_2\ge c_1>0$, and let $\delta>0$ be small enough. Let the initial data satisfy
$$
   p_0 = \frac{c_1}{\delta} \,,\qquad q_0 = \frac{1}{\delta^2} \,,\qquad r_0 = \frac{2}{n\delta^2} + \frac{c_2}{n\delta}
$$
Then the solution of \eqref{eq_1} with $(p(0),q(0),r(0)) = (p_0,q_0,r_0)$ satisfies
$$
   p(t) = u^*A^* t + o(t)\,,\qquad q(t) = (A^*)^2 t^2 + o(t^2) \,,\qquad r(t) = \frac{2}{n}(A^*)^2 t^2 + o(t^2) \,,\qquad\mbox{as } t\to\infty \,.
$$
\end{thm}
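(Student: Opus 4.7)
The plan is to reduce the theorem to Corollary~\ref{cor:1} by translating everything into the coordinates $(u,v,w)$ used in Section~\ref{sec:poly} and then inverting. The change of variables is algebraically invertible,
\[
   p = \frac{u}{w}\,,\qquad q = \frac{1-uw}{w^2}\,,\qquad r = \frac{2-vw}{nw^2}\,,
\]
so asymptotic information for $(u,v,w)$ translates directly into asymptotics for $(p,q,r)$.

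First I would compute the images of the prescribed initial data. Using $p_0+q_0 = (1+c_1\delta)/\delta^2$ together with the definitions of $u,v,w$, one obtains
\[
   u_0 = \frac{c_1}{\sqrt{1+c_1\delta}}\,,\qquad v_0 = \frac{2c_1-c_2}{\sqrt{1+c_1\delta}}\,,\qquad w_0 = \frac{\delta}{\sqrt{1+c_1\delta}}\,.
\]
The assumption $c_1>0$ guarantees $u_0>0$, while $c_2\ge c_1$ is exactly what is needed to make the admissibility condition $nr_0-p_0-2q_0 = (c_2-c_1)/\delta\ge 0$ hold (and $q_0\ge r_0$ follows for small $\delta$). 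Setting $\ep := w_0$, we see $\ep\to 0$ as $\delta\to 0$, while $(u_0,v_0)$ remains in a compact set with $u_0$ bounded away from $0$. Hence the hypotheses of Theorem~\ref{thm:sing-pert} are satisfied for $\delta$ small enough.

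Next I would invoke Corollary~\ref{cor:1} to conclude $u(t)\to u^*$, $v(t)\to v^*$ and $w(t) = 1/(A^*t) + O(1/t^2)$ as $t\to\infty$, and substitute into the inversion formulas. Since $u(t)w(t)$ and $v(t)w(t)$ are both $O(1/t)$ and $1/w(t)^2 = (A^*)^2 t^2 + O(t)$, one reads off
\[
   p(t) = u^*A^*\,t + o(t)\,,\qquad q(t) = (A^*)^2 t^2 + o(t^2)\,,\qquad r(t) = \tfrac{2}{n}(A^*)^2 t^2 + o(t^2)\,,
\]
as required.

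The only technical subtlety is that $(u_0,v_0)$ depend mildly on $\delta$, so one needs Corollary~\ref{cor:1} to hold uniformly over the compact family $\{(u_0(\delta),v_0(\delta)) : \delta\in(0,\delta_0]\}$. This is automatic: the Tikhonov--Fenichel estimates underlying Theorem~\ref{thm:sing-pert} are uniform on compact sets of initial data with $u_0$ bounded away from zero. Thus there is no genuine analytic obstacle; the main work is the bookkeeping of the $\delta$-scaled initial data and the inversion of the coordinate change at infinity.
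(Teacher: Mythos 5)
Your proposal is correct and follows essentially the same route as the paper: verify that the scaled initial data map to $(u_0,v_0,w_0)$ with $w_0=\delta/\sqrt{1+c_1\delta}$ playing the role of $\ep$, apply Theorem~\ref{thm:sing-pert} and Corollary~\ref{cor:1}, and invert the coordinate change. Your added details --- the explicit inversion formulas, the observation that $c_2\ge c_1$ is exactly the admissibility condition $nr_0-p_0-2q_0\ge 0$, and the remark on uniformity in $\delta$ --- are all sound and slightly more complete than the paper's two-line argument.
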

\begin{proof}
We just need to verify that the assumptions of this theorem imply the assumptions of Theorem \ref{thm:sing-pert}. The result is then a direct consequence 
of Corollary \ref{cor:1}. Actually the assumptions of Theorem \ref{thm:sing-pert} hold with $\ep\approx\delta$, since
$$
   u_0 = \frac{c_1}{\sqrt{1+c_1\delta}} \,,\qquad v_0 = \frac{2c_1-c_2}{\sqrt{1+c_1\delta}} \,,\qquad w_0 = \frac{\delta}{\sqrt{1+c_1\delta}} \,.
$$
\end{proof}

\section{Discussion}
In this work a mathematical model for aggregation via cross-linking has been analyzed. Besides the basic assumption that aggregating {\em particles} (here
p62 oligomers) need to have at least $n=3$ binding sites for {\em cross-linkers} (here ubiqutinated cargo), the rate constants for binding reactions need to be
large enough compared to those for the unbinding reactions (the opposite of inequality \eqref{alpha-bigger-1}) for stable aggregates to exist. Under a 
stronger condition (inequality \eqref{alpha-smaller-0}) aggregates grow indefinitely in the presence of an unlimited supply of free particles and cross-linkers.
These conjectures from \cite{firstarticle}, where the model has been formulated, have been partially proven in this work. It has been shown in Section
\ref{sec:zero} that small aggregates get completely degraded under the condition \eqref{alpha-bigger-1} and that they grow under the opposite condition.
In the latter case, but when \eqref{alpha-smaller-0} does not hold, there exists an equilibrium configuration with positive aggregate size. Finally, it has been 
shown in Section \ref{sec:poly} that under the condition \eqref{alpha-smaller-0} aggregate size grows polynomially with time (actually like $t^2$) for 
appropriate initial states.

\begin{figure}[h!] \centering
\includegraphics[width=0.5\textwidth]{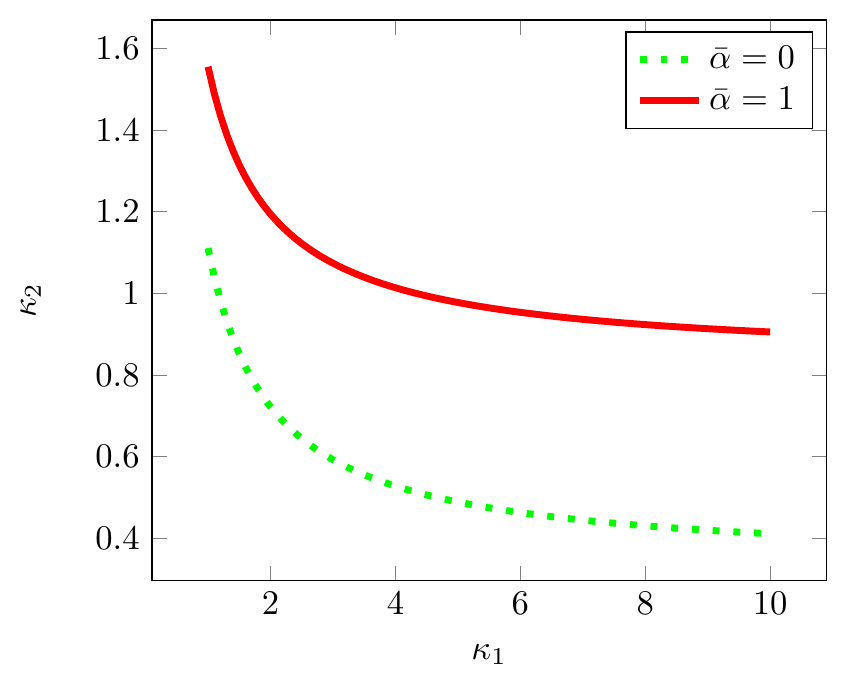}
\caption{Bifurcation diagram obtained for $\kappa_{-1}=\kappa_{-}=1$}
\label{fig:7} 
\end{figure}

The constants $\kappa_1$, $\kappa_2$ in the model have to be interpreted as the products of rate constants with the concentrations of free cross-linkers 
and, respectively, of free particles. This means that the conditions \eqref{alpha-bigger-1} and \eqref{alpha-smaller-0} are actually conditions for these
concentrations. Fig. \ref{fig:7} shows a bifurcation diagram in terms of $\kappa_1$ and $\kappa_2$ with the curves $\bar\alpha=1$, corresponding to
equality in \eqref{alpha-bigger-1}, and $\bar\alpha=0$,  corresponding to equality in \eqref{alpha-smaller-0}. The qualitative behaviour is no surprise:
Close to the origin, i.e. for small concentrations of free particles and cross-linkers, aggregates are unstable. Moving to the right and/or up we pass through
two bifurcations to stable finite aggregate size and, subsequently, to polynomial growth of aggregates. Less obvious is the fact that the picture is rather 
unsymmetric with respect to the two parameters. The condition $(n-2)\kappa_2> \kappa_-$ is necessary for the existence of stable aggregates, regardless 
of the value of $\kappa_1$, whereas arbitrarily small values of $\kappa_1$ can be compensated by large enough $\kappa_2$. This means that, if the concentration of free particles is below a threshold, even a large concentration of cross-linkers does not lead to aggregation, whereas arbitrarily small 
numbers of cross-linkers are used for aggregation if the particle concentration is high. For the application in cellular autophagy this means that aggregation
will only happen for large enough concentrations of p62 oligomers. However, arbitrarily small amounts of ubiquitinated cargo can be aggregated in the
presence of a large enough supply of oligomers.

This work has been motivated by the experimental results of \cite{Martens}, where aggregates have been detected by light microscopy. If the evolution of
single aggregates can be followed, the growth like $t^2$ might be observed as a fluorescence signal of tagged oligomers, which goes like $t^2$, or 
cross section areas going like $t^{4/3}$, if a roughly spherical shape of aggregates is assumed. For quantitative predictions of such experiments, the 
model should be extended in various ways. First, the {\em limited supply of free p62 oligomers and of free cross-linkers} should be taken into account. This is
straightforward for the modeling of a single aggregate, but if many aggregates develop simultaneously, they will compete for the free particles. Apart from
that the number of aggregates has to be predicted, which requires modeling of the {\em nucleation} process. Finally, it is very likely that the {\em coagulation} 
of aggregates plays an important role. A growth-coagulation model for distributions of aggregates, based on the growth model \eqref{eq_1} would be
prohibitively complex. It is therefore the subject of ongoing work to formulate, analyze, and simulate a growth-coagulation model based on the multiscale
analysis of Section \ref{sec:poly}, where aggregates are only described by the size parameter $r$ (number of p62 oligomers in the aggregate), whose
evolution is determined by the slow dynamics \eqref{slow-model}, which translates to an equation of the form $\dot r = C\sqrt{r}$ for $r$. This approach raises
several challenging issues such as the development of an efficient simulation algorithm or the existence and stability of equilibrium aggregate distributions.

\bibliography{mybib}{}

\begin{thebibliography}{1}

\bibitem{firstarticle}
J.~Delacour, M.~Doumic, S.~Martens, C.~Schmeiser, and G.~Zaffagnini.
\newblock A mathematical model of p62-ubiquitin aggregates in autophagy.
\newblock arXiv:2004.07926.

\bibitem{dumortier}
F.~Dumortier.
\newblock Techniques in the theory of local bifurcations: Blow-up, normal
  forms, nilpotent bifurcations, singular perturbations.
\newblock In {\em Bifurcations and Periodic Orbits of Vector Fields}, pages
  19--73. Kluwer Acad. Publ., 1993.

\bibitem{Fenichel}
N.~Fenichel.
\newblock Geometric singular perturbation theory for ordinary differential
  equations.
\newblock {\em Journal of Differential Equations}, 31:53--98, 1979.

\bibitem{Poincare}
H.~Poincar\'e.
\newblock M\'emoire sur les courbes d\'efinies par une \'equation
  diff\'erentielle (i).
\newblock {\em Journal de math\'ematiques pures et appliqu\'ees 3$^e$ s\'erie,
  tome 7}, pages 375--422, 1881.

\bibitem{Schmeiser}
C.~Schmeiser and R.~Wei\ss.
\newblock Asymptotic analysis of singular singularly perturbed boundary value
  problems.
\newblock {\em SIAM J. Math. Anal.}, 17:560--579, 1986.

\bibitem{Szmolyan}
P.~Szmolyan.
\newblock Transversal heteroclinic and homoclinic orbits in singular
  perturbation problems.
\newblock {\em Journal of Differential Equations}, 92:252--281, 1991.

\bibitem{Tikhonov}
A.~N. Tikhonov.
\newblock Systems of differential equations containing small parameters in the
  derivatives.
\newblock {\em Matematicheskii sbornik}, 73:575--586, 1952.

\bibitem{Martens}
G.~Zaffagnini, A.~Savova, A.~Danieli, J.~Romanov, S.~Tremel, M.~Ebner,
  T.~Peterbauer, M.~Sztacho, R.~Trapannone, A.K. Tarafder, C.~Sachse, and
  S.~Martens.
\newblock p62 filaments capture and present ubiquitinated cargos for autophagy.
\newblock {\em The EMBO Journal}, 37(5):e98308, 03 2018.

\end{thebibliography}
\bibliographystyle{plain}
\end{document}